\newtheorem{theorem}{Theorem}[section]        
\newtheorem{corollary}[theorem]{Corollary}
\newtheorem{proposition}[theorem]{Proposition}
\newtheorem*{main theorem}{Main Theorem}
\theoremstyle{remark}      
\theoremstyle{definition}  
\newtheorem{definition}[theorem]{Definition}   
\def\N{\mathbb{N}}
\def\Z{\mathbb{Z}}
\begin{document}
\title{Schur's Theorem in Integer Lattices} 
\author{Vishal Balaji, Andrew Lott, Alex Rice}
 
\begin{abstract} A standard proof of Schur's Theorem yields that any $r$-coloring of $\{1,2,\dots,R_r-1\}$ yields a monochromatic solution to $x+y=z$, where $R_r$ is the classical $r$-color Ramsey number, the minimum $N$ such that any $r$-coloring of a complete graph on $N$ vertices yields a monochromatic triangle. We explore generalizations and modifications of this result in higher dimensional integer lattices, showing in particular that if $k\geq d+1$, then any $r$-coloring of $\{1,2,\dots,R_r(k)^d-1\}^d$ yields a monochromatic solution to $x_1+\cdots+x_{k-1}=x_k$ with $\{x_1,\dots,x_d\}$ linearly independent, where $R_r(k)$ is the analogous Ramsey number in which triangles are replaced by complete graphs on $k$ vertices. We also obtain computational results and examples in the case $d=2$, $k=3$, and $r\in\{2,3,4\}$.

\end{abstract}

\address{Department of Mathematics, Millsaps College, Jackson, MS 39210}
\email{balajv1@millsaps.edu} 
\email{lottal@millsaps.edu}
\email{riceaj@millsaps.edu}

\maketitle 
\setlength{\parskip}{5pt}   

\section{Introduction}

 The following striking result of Schur \cite{Schur} dates back over a century. Prior to the statement, we quickly develop some notation and terminology: for $N\in \N$ we use $[N]$ to denote $\{1,2,\dots,N\}$, and for $r\in \N$ we refer to a partition of a set into $r$ pairwise disjoint subsets (which we think of as $r$ different \textit{colors}) as an \textit{$r$-coloring}. Finally, we refer to collections of elements that are all the same color as \textit{monochromatic}. 

\begin{theorem}[Schur's Theorem] \label{sthm} For every $r\in \N$, there exists $N\in \N$ such that every $r$-coloring of $[N]$ yields a monochromatic set of the form $\{x,y,x+y\}$, in other words a monochromatic solution to $x+y=z$. 
\end{theorem}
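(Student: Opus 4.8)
The plan is to deduce the statement from Ramsey's theorem for complete graphs, which is precisely the source of the Ramsey number $R_r$ mentioned in the abstract. Recall that $R_r$ is the least integer such that every $r$-coloring of the edges of the complete graph on $R_r$ vertices contains a monochromatic triangle. I will take $N = R_r - 1$ and show that this choice works, which simultaneously establishes existence and yields the sharp bound advertised in the abstract.

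Given an arbitrary $r$-coloring $c \colon [N] \to \set{1,\dots,r}$, the first and central step is to transfer this coloring of integers to an edge-coloring of a graph. I would form the complete graph on the vertex set $\set{0,1,\dots,N}$, which has exactly $N+1 = R_r$ vertices, and color each edge $\set{i,j}$ with $i<j$ by the color $c(j-i)$. This is well-defined because the difference satisfies $1 \le j-i \le N$, so it always lies in the domain $[N]$ of $c$.

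Since the graph has $R_r$ vertices, Ramsey's theorem guarantees a monochromatic triangle, say on vertices $i<j<k$. By construction the three edges receive colors $c(j-i)$, $c(k-j)$, and $c(k-i)$, and monochromaticity forces these to coincide. Setting $x = j-i$, $y = k-j$, and $z = k-i$, I obtain $x+y = (j-i)+(k-j) = k-i = z$ together with $c(x)=c(y)=c(z)$, so $\set{x,y,z}$ is the desired monochromatic set. Note that $x,y \ge 1$ and $z = x+y \ge 2$ automatically place all three values in $[N]$, and no distinctness hypothesis is needed for the conclusion.

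The argument is short once Ramsey's theorem is invoked, so the only real obstacle is bookkeeping: one must align the number of vertices with the Ramsey threshold (hence the $N+1$ vertices labeled $0$ through $N$) and confirm that every edge difference lands in $[N]$. The genuinely substantive input, namely the finiteness of $R_r$, is supplied by Ramsey's theorem and is not reproved here.
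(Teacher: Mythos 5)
Your proof is correct and is essentially the same argument the paper gives (for its quantitative generalization, Theorem \ref{skthm}, of which Schur's Theorem is the case $k=3$): color the edges of a complete graph on $R_r$ vertices by the color of the difference of the endpoints and apply Ramsey's Theorem to extract a monochromatic triangle, hence a monochromatic Schur triple. The only cosmetic difference is your vertex labeling $\{0,1,\dots,N\}$ versus the paper's $\{1,\dots,N+1\}$.
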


\noindent The existence of $N\in \N$ that satisfies the conclusion of Theorem \ref{sthm} implies the existence of a \textit{first} $N\in \N$ that satisfies said conclusion, in other words a ``breaking point" at which the desired pattern switches from non-guaranteed to guaranteed, which we refer to as the \textit{Schur number} $S(r)$. The only known Schur numbers are $S(1)=2$, $S(2)=5$, $S(3)=14$, $S(4)=45$, and $S(5)=161$. The last of these, announced in 2017, was determined by Heule \cite{sn5} using a SAT solver computation over two petabytes in size. 

Even though the former predates the latter, the slickest and now most standard proof of Theorem \ref{sthm} uses the following seminal result of Ramsey \cite{ramsey}. 

\begin{theorem}[Ramsey's Theorem]\label{ramthm} For $r,k\in \N$, there exists $N\in \N$ such that every $r$-coloring of the edges of  a complete graph on $N$ vertices yields a monochromatic complete subgraph on $k$ vertices. 
\end{theorem}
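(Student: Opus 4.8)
The plan is to prove the statement in two stages: first settle the two-color case by a double induction, and then bootstrap to arbitrary $r$ by an amalgamation argument.

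For two colors I would work with the off-diagonal version. For $s,t\in\N$, let $R(s,t)$ denote the least $N$ (if one exists) such that every red/blue coloring of the edges of a complete graph on $N$ vertices contains either a red complete subgraph on $s$ vertices or a blue complete subgraph on $t$ vertices; the two-color instance of the theorem is then the finiteness of $R(k,k)$. The base cases $R(s,1)=R(1,t)=1$ hold trivially, and the heart of the matter is the recursive bound
\[ R(s,t)\leq R(s-1,t)+R(s,t-1), \]
from which finiteness of every $R(s,t)$ follows by induction on $s+t$. The key step is the pigeonhole argument establishing this recursion: setting $N=R(s-1,t)+R(s,t-1)$ and fixing any vertex $v$, the $N-1$ edges at $v$ split into red and blue, so either at least $R(s-1,t)$ of them are red or at least $R(s,t-1)$ are blue. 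In the first case I would restrict attention to the red-neighbors of $v$: among them one finds either a blue complete subgraph on $t$ vertices (and we are done) or a red complete subgraph on $s-1$ vertices, which together with $v$ yields a red complete subgraph on $s$ vertices. The second case is symmetric.

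To pass to $r$ colors I would induct on $r$, writing $R_r(k)$ for the least $N$ forcing a monochromatic complete subgraph on $k$ vertices under every $r$-coloring, with trivial base case $R_1(k)=k$. Given an $r$-coloring, I would merge the last two colors into a single amalgamated color, producing an $(r-1)$-coloring, and choose $N$ so large that this coarser coloring is guaranteed a monochromatic complete subgraph on $m=R(k,k)$ vertices. If that subgraph sits in one of the first $r-2$ genuine colors, then since $m\geq k$ we already have a monochromatic $K_k$; if instead it lies in the amalgamated color, then within it the original coloring uses only the two merged colors, so the two-color result applied to these $m$ vertices again extracts a monochromatic $K_k$. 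This yields $R_r(k)\leq R_{r-1}(R(k,k))$, and the induction closes.

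The main obstacle is the bookkeeping in this last amalgamation step: one must choose the inflated clique size $m$ in the $(r-1)$-color induction large enough to serve double duty — directly supplying a monochromatic $K_k$ when the clique falls in an unmerged color, yet also large enough to absorb a second application of the two-color result when it falls in the merged color. Taking $m=R(k,k)$ handles both simultaneously, but verifying that no case slips through, and that the merging genuinely reduces the number of colors without introducing spurious monochromatic structure, is the step I would be most careful to get right.
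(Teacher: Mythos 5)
The paper does not prove this statement at all: Ramsey's Theorem is quoted as a classical black box, attributed to \cite{ramsey}, and only its consequence (the definition and use of $R_r(k)$) enters the paper's arguments. Your proposal therefore cannot be compared against an internal proof, but it is a complete and correct rendition of the standard Erd\H{o}s--Szekeres argument. The two-color recursion $R(s,t)\leq R(s-1,t)+R(s,t-1)$ is justified correctly by the pigeonhole count at a single vertex (if fewer than $R(s-1,t)$ red and fewer than $R(s,t-1)$ blue edges met $v$, there would be at most $N-2$ edges at $v$), and the base cases $R(s,1)=R(1,t)=1$ are fine since a single vertex is vacuously a monochromatic complete subgraph. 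The amalgamation step is also sound: with $N\geq R_{r-1}(R(k,k))$ the coarse $(r-1)$-coloring forces a monochromatic $K_m$ with $m=R(k,k)\geq k$, and both the unmerged and merged cases deliver a monochromatic $K_k$ in the original coloring, giving $R_r(k)\leq R_{r-1}(R(k,k))$ and closing the induction on $r$. The one bookkeeping point you flag --- that $m$ must serve both cases --- is indeed handled by taking $m=R(k,k)$, since $R(k,k)\geq k$. Your argument also yields explicit finite bounds on $R_r(k)$, which is more than the bare existence statement requires and is consistent with how the paper uses these numbers.
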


\noindent In a similar spirit to our definition of Schur numbers, we define the following class of \textit{Ramsey numbers}. 

\begin{definition}For $r,k\in \N$, let $R_r(k)$ denote the minimum $N\in \N$ such that every $r$-coloring of the edges of a complete graph on $N$ vertices yields a monochromatic complete subgraph on $k$ vertices.  \end{definition}

The following is a quantitative version of Schur's Theorem, generalized to any number of variables, which in particular is a special case of a result listed in item 6.2(i) of \cite{Radz}. We include the short, standard proof below for the sake of exposition. 
 
\begin{theorem}\label{skthm} Suppose $N,r,k\in \N$. If $N\geq R_r(k)-1$, then every $r$-coloring of $[N]$ yields a monochromatic solution to $x_1+\cdots+x_{k-1}=x_k.$
\end{theorem}

\begin{proof} Suppose $N,r,k\in \N$ with $N\geq R_r(k)-1$. Fix an arbitrary $r$-coloring of $[N]$. From this coloring, define an $r$-coloring of the edges of a complete graph on $N+1$ vertices by coloring the edge connecting vertex $i$ to vertex $j$ with the color assigned to the integer $|i-j|$. By definition of $R_r(k)$, there exist vertices $i_1<i_2<\cdots<i_k$ such that the edges connecting $i_{\ell+1}$ to $i_{\ell}$ for $1\leq \ell\leq k-1$, and the edge connecting $i_k$ to $i_1$, are all the same color.

\noindent Translating this back to the integer coloring, we have that $i_{\ell+1}-i_{\ell}$ for $1\leq \ell \leq k-1$, and $i_k-i_1$, are all the same color. Further, all of these numbers lie in $[N]$ and satisfy \begin{equation*} (i_2-i_1)+(i_3-i_2)+\cdots+(i_k-i_{k-1})=i_k-i_1. \qedhere \end{equation*} \end{proof}

Just as classical Schur numbers were defined from Theorem \ref{sthm}, one can define a generalized Schur number $S(r,k)$ as the minimum $N\in \N$ for which the conclusion of Theorem \ref{skthm} holds. For example, Boza, Mar\'{\i}n, Reveulta, and Sanz \cite{boza} determined the exact formula $S(3,k)=k^3-k^2-k-1$.

A natural question is whether a version of Theorem \ref{sthm}, or Theorem \ref{skthm}, holds not just in the integers, but in other additive settings such as $[N]^d$, a chunk of the $d$-dimensional integer lattice, for $d>1$. However, without any additional modifications, the extension to higher dimensions turns out to be disappointingly trivial. Specifically, if $N\geq S(r)$, then one can always find a monochromatic solution to $x+y=z$, which we call a \textit{Schur triple}, in an $r$-coloring of $[N]^d$ by just looking along the diagonal $\{(n,n,\dots,n): n\in [N]\}$ and reducing back to the one-dimensional case. Conversely, armed with an $r$-coloring of $[S(r)-1]$ free of Schur triples, one can color $[S(r)-1]^d$ based solely on the first coordinate, and the lack of Schur triples is preserved. In other words, the breaking point is exactly the same in all dimensions. 
 
The question becomes more interesting if we insist on a level of non-degeneracy from our solutions in higher dimensions. Our main result is the following. 

\begin{theorem}\label{main} Suppose $N,r,k,d \in \N$ with $k\geq d+1$. If $N\geq R_r(k)^d-1$, then every $r$-coloring of $[N]^d$ yields a monochromatic solution to $x_1+\cdots+x_{k-1}=x_{k} $ with $\{x_1,\dots,x_d\}$ linearly independent. 

\end{theorem}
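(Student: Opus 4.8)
The plan is to imitate the one-variable argument behind Theorem \ref{skthm}, but to place the vertices of the auxiliary complete graph at carefully chosen lattice points so that the telescoping differences produced by a monochromatic clique are forced into general position. Set $M = R_r(k)$, so that the hypothesis reads $N \geq M^d - 1$, and fix an arbitrary $r$-coloring $\chi$ of $[N]^d$. Rather than spacing $M$ vertices along a line as in the proof of Theorem \ref{skthm}, I would spread them along the moment curve: assign to vertex $t \in \{1, 2, \dots, M\}$ the point $\gamma(t) = (t, t^2, \dots, t^d) \in \Z^d$. For $1 \leq i < j \leq M$ every coordinate of $\gamma(j) - \gamma(i)$ is a positive integer, and the largest possible such coordinate is $M^d - 1 \leq N$, so each of these differences is a legitimate element of $[N]^d$ whose $\chi$-color we may read off.

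I would then color the edge $\{i, j\}$ of the complete graph on $\{1, \dots, M\}$ by $\chi(\gamma(j) - \gamma(i))$ (taking $i<j$) and invoke the defining property of $R_r(k) = M$ to obtain a monochromatic complete subgraph on vertices $j_1 < j_2 < \cdots < j_k$, of some color $c$. Exactly as before, the vectors $x_\ell = \gamma(j_{\ell+1}) - \gamma(j_\ell)$ for $1 \leq \ell \leq k-1$, together with $x_k = \gamma(j_k) - \gamma(j_1)$, all receive color $c$, lie in $[N]^d$, and telescope to $x_1 + \cdots + x_{k-1} = x_k$. This yields a monochromatic solution for free; the entire content of the theorem beyond Theorem \ref{skthm} is the linear independence, and that is where the moment curve earns its keep.

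The main obstacle is precisely to guarantee that $\{x_1, \dots, x_d\}$, after a harmless relabeling, is linearly independent, and here I would use the classical fact that any $d+1$ distinct points on the moment curve are affinely independent, since the relevant determinant is a nonzero Vandermonde. Because $k \geq d+1$, the $k$ points $\gamma(j_1), \dots, \gamma(j_k)$ therefore affinely span $\R^d$, so the vectors $\gamma(j_m) - \gamma(j_1) = x_1 + \cdots + x_{m-1}$ span $\R^d$; equivalently, $\{x_1, \dots, x_{k-1}\}$ has rank $d$ and hence contains $d$ linearly independent vectors. Finally, since the summands on the left-hand side of $x_1 + \cdots + x_{k-1} = x_k$ may be permuted freely, I would relabel so that a linearly independent $d$-element subset occupies the positions $x_1, \dots, x_d$, completing the argument. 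I expect the only points needing genuine care to be the verification that all referenced differences really lie in $[N]^d$ (a short estimate using $N \geq M^d - 1$) and the passage from ``the full collection spans'' to ``the first $d$ are independent'' via this reordering.
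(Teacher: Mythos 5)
Your proposal is correct and follows essentially the same route as the paper: place the $M=R_r(k)$ Ramsey vertices on the moment curve $(t,t^2,\dots,t^d)$, color edges by the color of the difference vector, telescope, and appeal to the nonvanishing Vandermonde determinant for linear independence. The only (harmless) difference is at the final step: the paper shows directly that the $d$ consecutive differences $y_{i_2}-y_{i_1},\dots,y_{i_{d+1}}-y_{i_d}$ are independent by row-reducing the Vandermonde matrix, whereas you show that the full set of $k-1$ differences spans $\R^d$ and then permute the summands of $x_1+\cdots+x_{k-1}=x_k$ to put an independent $d$-subset first --- both are valid.
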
 

In addition, we approach the problem computationally in Section \ref{compsec}, establishing via SAT solver computations sharp results in the case $d=2$, $k=3$, $r\in \{2,3\}$, and a nontrivial lower bound in the case $d=2$, $k=3$, $r=4$. We conclude by displaying examples exhibiting the lower bound in each case. 

\section{Notation and preliminary observations}

We begin by defining a family of modified Schur numbers.

\begin{definition} For $r,k,d,j\in \N$ with $j\leq \min\{d,k-1\}$, let $S_{d,j}(r,k)$ denote the minimum $N\in \N$ such that every $r$-coloring of $[N]^d$ yields a monochromatic solution to $x_1+\cdots+x_{k-1}=x_k$ with $\{x_1,\dots,x_j\}$ linearly independent. In the absence of  any of the parameters $k$, $d$, or $j$ ($r$ is always included, and $j$ is never included without $d$), we assume their most classical value, specifically $k=3$, $d=j=1$. In particular, $S(r)$ and $S(r,k)$ are still as previously defined. We also let $S^*_d(r,k)=S_{d,d}(r,k)$. \end{definition}

\noindent Using our newly developed notation, Theorem \ref{main} is precisely the statement that \begin{equation*} S^*_d(r,k) \leq R_r(k)^d-1 \end{equation*} whenever $k\geq d+1$. Generalizing the idea mentioned in the introduction of looking for Schur triples along the diagonal, the following proposition says that, all other parameters fixed, raising the dimension can only lower the modified Schur number in question. This is the reason we consider the maximal linear independence case $j=d$ in our main result. 

\begin{proposition}\label{star} Suppose $r,k,d,j\in \N$ with $j\leq \min\{d,k-1\}$. If $S_{d,j}(r,k)$ exists, then $S_{d+1,j}(r,k)$ exists and satisfies $S_{d+1,j}(r,k) \leq S_{d,j}(r,k)$.
\end{proposition}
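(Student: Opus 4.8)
The plan is to show that a single threshold does double duty: the value $N = S_{d,j}(r,k)$ that guarantees the pattern in dimension $d$ will also guarantee it in dimension $d+1$, which is exactly the asserted inequality. The mechanism is a linear embedding of $[N]^d$ into $[N]^{d+1}$ together with a pullback of colorings.

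First I would fix $N = S_{d,j}(r,k)$ (finite by hypothesis) and record that $j \le \min\{d,k-1\} \le \min\{d+1,k-1\}$, so that $S_{d+1,j}(r,k)$ is a legitimate quantity to bound. Next I would define $\phi\colon [N]^d \to [N]^{d+1}$ by $\phi(a_1,\dots,a_d) = (a_1,\dots,a_d,a_d)$, appending a duplicate of the final coordinate. The key features to check --- all immediate --- are that $\phi$ is the restriction of an injective linear map $\mathbb{R}^d \to \mathbb{R}^{d+1}$ and that it maps $[N]^d$ into $[N]^{d+1}$.

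With $\phi$ in hand, given an arbitrary $r$-coloring $\chi$ of $[N]^{d+1}$, I would pull it back to the coloring $\chi' = \chi \circ \phi$ of $[N]^d$. Since $N = S_{d,j}(r,k)$, there is a $\chi'$-monochromatic solution $v_1 + \cdots + v_{k-1} = v_k$ in $[N]^d$ with $\{v_1,\dots,v_j\}$ linearly independent. Pushing forward, set $x_i = \phi(v_i)$: linearity of $\phi$ yields $x_1 + \cdots + x_{k-1} = \phi(v_1 + \cdots + v_{k-1}) = \phi(v_k) = x_k$, each $x_i$ lies in $[N]^{d+1}$, and $\chi(x_i) = \chi'(v_i)$ shows the $x_i$ are $\chi$-monochromatic. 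Because $\phi$ is an injective linear map, it sends the linearly independent set $\{v_1,\dots,v_j\}$ to a linearly independent set $\{x_1,\dots,x_j\}$. As $\chi$ was arbitrary, $N$ satisfies the defining condition of $S_{d+1,j}(r,k)$, giving both its existence and the bound $S_{d+1,j}(r,k) \le N = S_{d,j}(r,k)$.

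I do not expect a serious obstacle; the whole argument hinges on choosing an embedding that is simultaneously \emph{linear} (so that the equation $x_1 + \cdots + x_{k-1} = x_k$ is preserved --- this is why one duplicates a coordinate rather than appending a constant, which would break the last coordinate of the equation unless $k = 2$) and \emph{injective} (so that linear independence is preserved). The one point worth stating cleanly is the elementary fact that an injective linear map preserves linear independence, since it is precisely this that upgrades the $d$-dimensional non-degeneracy to $(d+1)$-dimensional non-degeneracy.
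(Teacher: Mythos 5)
Your proposal is correct and is essentially identical to the paper's proof: both use the embedding that duplicates the last coordinate, pull back the coloring, and push the monochromatic solution forward, with the injective linear map preserving both the equation and the linear independence. The only difference is that you spell out the linearity/injectivity verifications slightly more explicitly than the paper does.
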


\begin{proof} Suppose $r,k,d,j\in \N$ with $j\leq \min\{d,k-1\}$, and assume $S_{d,j}(r,k)$ exists. Let $N=S_{d,j}(r,k)$, and for $x=(n_1,\dots,n_d)\in [N]^d$, let $\tilde{x}=(n_1,\dots,n_d,n_d)\in [N]^{d+1}$. Fix an arbitrary $r$-coloring of $[N]^{d+1}$, and define an $r$-coloring of $[N]^d$ by coloring $x\in [N]^d$ the same as $\tilde{x}\in [N]^{d+1}$. By definition of $S_{d,j}(r,k)$, there exists a monochromatic solution to $x_1+\cdots+x_{k-1}=x_k$ in $[N]^d$ with $\{x_1,\dots,x_j\}$ linearly independent. This lifts to a monochromatic solution $\tilde{x}_1+\cdots+\tilde{x}_{k-1}=\tilde{x}_k$ in $[N]^{d+1}$, and the linear independence is also preserved. Therefore, $S_{d+1,j}(r,k) \leq S_{d,j}(r,k)$.
\end{proof}

\noindent Applying Proposition \ref{star} inductively and then Theorem \ref{main} yields the following corollary. 

\begin{corollary} If $r,k,d,j\in \N$ and $j\leq \min\{d,k-1\}$, then $S_{d,j}(r,k)\leq  R_r(k)^j-1$.
\end{corollary}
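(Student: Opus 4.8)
The plan is to combine the two results already available: Proposition \ref{star}, which trades ambient dimension for a fixed independence requirement, and Theorem \ref{main}, which supplies the base estimate in the maximal-independence case $j=d$. The idea is to start in the smallest dimension, namely dimension $j$, where $j$-fold linear independence coincides with full linear independence and Theorem \ref{main} applies directly, and then climb up to the ambient dimension $d$, at each step only decreasing the modified Schur number.

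First I would record that the hypothesis $j\leq \min\{d,k-1\}$ forces both $d\geq j$ and $k\geq j+1$. Applying Theorem \ref{main} with dimension equal to $j$ (whose hypothesis $k\geq j+1$ is exactly the inequality just noted) gives that $S^*_j(r,k)=S_{j,j}(r,k)$ exists and satisfies $S_{j,j}(r,k)\leq R_r(k)^j-1$. I would then apply Proposition \ref{star} repeatedly, starting from dimension $j$ and increasing to dimension $d$: for each $d'$ with $j\leq d'<d$ the side condition $j\leq\min\{d',k-1\}$ holds (since $d'\geq j$ and $j\leq k-1$), so Proposition \ref{star} yields that $S_{d'+1,j}(r,k)$ exists with $S_{d'+1,j}(r,k)\leq S_{d',j}(r,k)$. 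Chaining these inequalities gives $S_{d,j}(r,k)\leq S_{j,j}(r,k)\leq R_r(k)^j-1$, as claimed. Starting the induction from the base case furnished by Theorem \ref{main} is what licenses the repeated use of Proposition \ref{star}, whose conclusion is conditional on the existence of the lower-dimensional quantity.

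There is no substantive obstacle here; the proof is purely a matter of correctly assembling the two prior results. The only points requiring care are bookkeeping: verifying that the side condition $j\leq\min\{d',k-1\}$ of Proposition \ref{star} survives at every intermediate dimension $d'$ between $j$ and $d$, matching Theorem \ref{main}'s hypothesis $k\geq d+1$ to the present setting by replacing $d$ with $j$, and tracking the existence claims so that the conditional statement in Proposition \ref{star} is always invoked with its hypothesis already established.
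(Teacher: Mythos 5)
Your proposal is correct and is exactly the argument the paper intends: the corollary is stated immediately after the remark that it follows from ``applying Proposition \ref{star} inductively and then Theorem \ref{main},'' which is precisely your plan of invoking Theorem \ref{main} in dimension $j$ (where $k\geq j+1$ holds) and then chaining Proposition \ref{star} from dimension $j$ up to $d$. Your additional care about the side condition at each intermediate dimension and about the existence hypothesis in Proposition \ref{star} is sound bookkeeping that the paper leaves implicit.
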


\section{Proof of Theorem \ref{main}}   

We now establish our main result by appropriately adapting the proof of Theorem \ref{skthm}. 

\begin{proof} Suppose $r,k,d\in \N$ with $k\geq d+1$, let $M=R_r(k)$, and let $N=M^d-1$. For $1\leq i \leq M$, let $y_i=(i,i^2,\dots,i^d)$. Fix an arbitrary $r$-coloring of $[N]^d$. We mimic the standard proof of Theorem \ref{skthm} and use this coloring to define an $r$-coloring of the edges of a complete graph on $M$ vertices, coloring the edge connecting vertex $i$ to vertex $j$, for $i<j$, with the color assigned to $y_j-y_i\in [N]^d$. By definition of $R_r(k)$, there exist vertices $i_1<i_2<\cdots<i_k$ such that the edges connecting $i_{j+1}$ to $i_j$ for $1\leq j\leq k-1$, and the edge connecting $i_k$ to $i_1$, are all the same color. Translating this back to the integer lattice coloring, we have that $y_{i_{j+1}}-y_{i_j}$ for $1\leq j \leq k-1$, and $y_{i_k}-y_{i_1}$, are all the same color. Further, these vectors satisfy \begin{equation*} (y_{i_2}-y_{i_1})+(y_{i_3}-y_{i_2})+\cdots+(y_{i_k}-y_{i_{k-1}})=y_{i_k}-y_{i_1}. \end{equation*}
It remains to be shown that the vectors $\{y_{i_2}-y_{i_1},\dots,y_{i_{d+1}}-y_{i_{d}}\}$ are linearly independent. We establish this fact using the $d\times d$ matrix formed with the relevant vectors as its rows, in other words \begin{equation*} A= \begin{bmatrix}
    i_2-i_1		 & i_2^2-i_1^2	 	& \dots 	& i_2^d-i_1^d \\
    i_3-i_2   	 & i_3^2-i_2^2   	& \dots 	& i_3^d-i_2^d \\
    \vdots		 &\vdots			& \ddots		&\vdots\\
    i_{d+1}-i_d   & i_{d+1}^2-i_d^2  & \dots 	&i_{d+1}^d-i_d^d\\
\end{bmatrix}.\end{equation*}
First, we note that a sequence of determinant-preserving row operations transforms the $(d+1)\times(d+1)$ \textit{Vandermonde matrix} \begin{equation*}V= \begin{bmatrix}
    1		 &  i_1	 	&  i_1^2 & \dots 	& i_1^d \\
    1   	 & i_2   	& i_2^2 & \dots 	& i_2^d \\
    \vdots		 &\vdots & \vdots			& \ddots		&\vdots\\
    1   & 	i_{d+1}  & i_{d+1}^2 & \dots 	& i_{d+1}^d\\
\end{bmatrix} \end{equation*} into 
\begin{equation*} \begin{bmatrix}
    1 & i_1 & i_1^2 & \dots & i_1^d \\
	0 & i_2-i_1		 & i_2^2-i_1^2	 	& \dots 	& i_2^d-i_1^d \\
    0 & i_3-i_2   	 & i_3^2-i_2^2   	& \dots 	& i_3^d-i_2^d \\
    \vdots & \vdots		 &\vdots			& \ddots		&\vdots\\
    0 & i_{d+1}-i_d   & i_{d+1}^2-i_d^2  & \dots 	&i_{d+1}^d-i_d^d\\
\end{bmatrix},\end{equation*}
so in particular $\det(A)=\det(V)$. This determinant, called the \textit{Vandermonde determinant}, is known to be precisely $\prod_{1\leq j<\ell\leq d+1} (i_{\ell}-i_j)$, which is positive since our inputs are strictly increasing. For a more self-contained approach, suppose $\mathbf{a}=(a_0,\dots,a_d)$ is a row vector satisfying $V\mathbf{a}^T=\mathbf{0}$. This implies that $i_1,\dots,i_{d+1}$ are all roots of the polynomial $$p(x)=a_0+a_1x+\cdots+a_dx^d.$$ If $p(x)$ is nonzero, then it has at most $d$ roots, so if $i_1,\dots,i_{d+1}$ are all distinct, it must be the case that $a_0=a_1=\cdots=a_d=0$. Therefore, $V$ and $A$ are both nonsingular, which completes the proof. 
\end{proof}

\section{Examples and computations} \label{compsec}

In this section, we describe in a general way how to use a SAT solver to obtain exact values or lower bounds for $S_{d,j}(r,k)$, and operationalize the method for $d=j=2$, $k=3$ and $r\in \{2,3,4\}$. In that context, we refer to a  monochromatic solution to $x_1+\cdots+x_{k-1}=x_{k}$ in $\Z^d$ with $x_1,\dots,x_j$ linearly independent as a \textit{$j$-nondegenerate Schur $k$-tuple}, while we drop the $j$ descriptor if $j=k-1$. 

\noindent Consider an $r$-coloring of $[N]^d$ defined by $\Delta: [N]^d\rightarrow\{1,2,...,r\}.$ Following the lead of Boza, et al. \cite{boza} and Heule \cite{sn5}, we write a logical expression in conjunctive normal form (cnf) which is true if and only if $\Delta$ yields no monochromatic $j$-nondegenerate Schur $k$-tuple. For each $p\in[N]^d$ and for each $m\in\{1,2,...,r-1\}$, define a boolean variable $\phi_m(p)$ by 
  \[\phi_m(p)=
  \begin{cases}
                                   \text{True} &  \text{if } \Delta(p)=m \\
                                   \text{False} & \text{otherwise} \\
  \end{cases}.\]
(Note that $\Delta(p)=r$ when $\phi_1(p),\phi_2(p),...,\phi_{r-1}(p)$ are all false). In order to guarantee that $\Delta$ assigns exactly one color to each point, our cnf expression must include
\[\mathcal{D}=\bigwedge_{p\in[N]^d}\bigwedge_{i<j\leq{r-1}}\big(\neg\phi_i(p)\lor\neg\phi_j(p)\big).\]
Now, let $\mathcal{F}$ be the family of all $j$-nondegenerate Schur $k$-tuples in $[N]^d$. We observe that $\Delta$ yields no monochromatic $j$-nondegenerate Schur $k$-tuple if and only if for each $\{p_1,p_2,...,p_k\}\in{\mathcal{F}}$ and for each color $i\in\{1,2,...,r\}$, $\Delta$ assigns at least one of the $k$ points to a color that is not $i$. Thus, for each $\{p_1,p_2,...,p_k\}\in{\mathcal{F}}$ we include the expression 
\[\mathcal{C}_{\{p_1,p_2,...,,p_k\}}=\left(\bigwedge_{i\in[r-1]}\left(\bigvee_{j\in[k]}\neg\phi_i(p_j)\right)\right)\land \left(\bigvee_{i\in[r-1]}\bigvee_{j\in[k]}\phi_i(p_j)\right).\]
Now, set 
\[\mathcal{C}=\bigwedge_{\{p_1,p_2,...,p_k\}\in{\mathcal{F}}}\mathcal{C}_{\{p_1,p_2,...,p_k\}}.\]
Then, $\Delta$ induces a coloring with no $j$-nondegenerate Schur $k$-tuples if and only if 
\begin{equation} \label{iffexp} \mathcal{D}\land\mathcal{C}. \end{equation}
In the case $d=j=2$, $k=3$, and $r\in \{2,3,4\}$, we ran \eqref{iffexp} through the following SAT solvers, with some computations completed on the Maple super cluster at the University of Mississippi, access to which was generously provided by the Mississippi Center for Supercomputing Research:

\begin{itemize} \item \url{https://github.com/arminbiere/lingeling}

\item \url{https://github.com/arminbiere/cadical}

\item \url{https://github.com/arminbiere/kissat}

\item \url{https://github.com/msoos/cryptominisat}
\end{itemize}

\

\noindent Our computations yielded $S^{\ast}_2(2,3)=7$, $S^{\ast}_2(3,3)=18$, and $S^{\ast}_2(4,3)\geq 49$, as well as examples exhibiting the lower bound in each case displayed in the three figures below, with which we conclude our discussion.

\

\begin{figure} [H] 
		\includegraphics[scale=0.5]{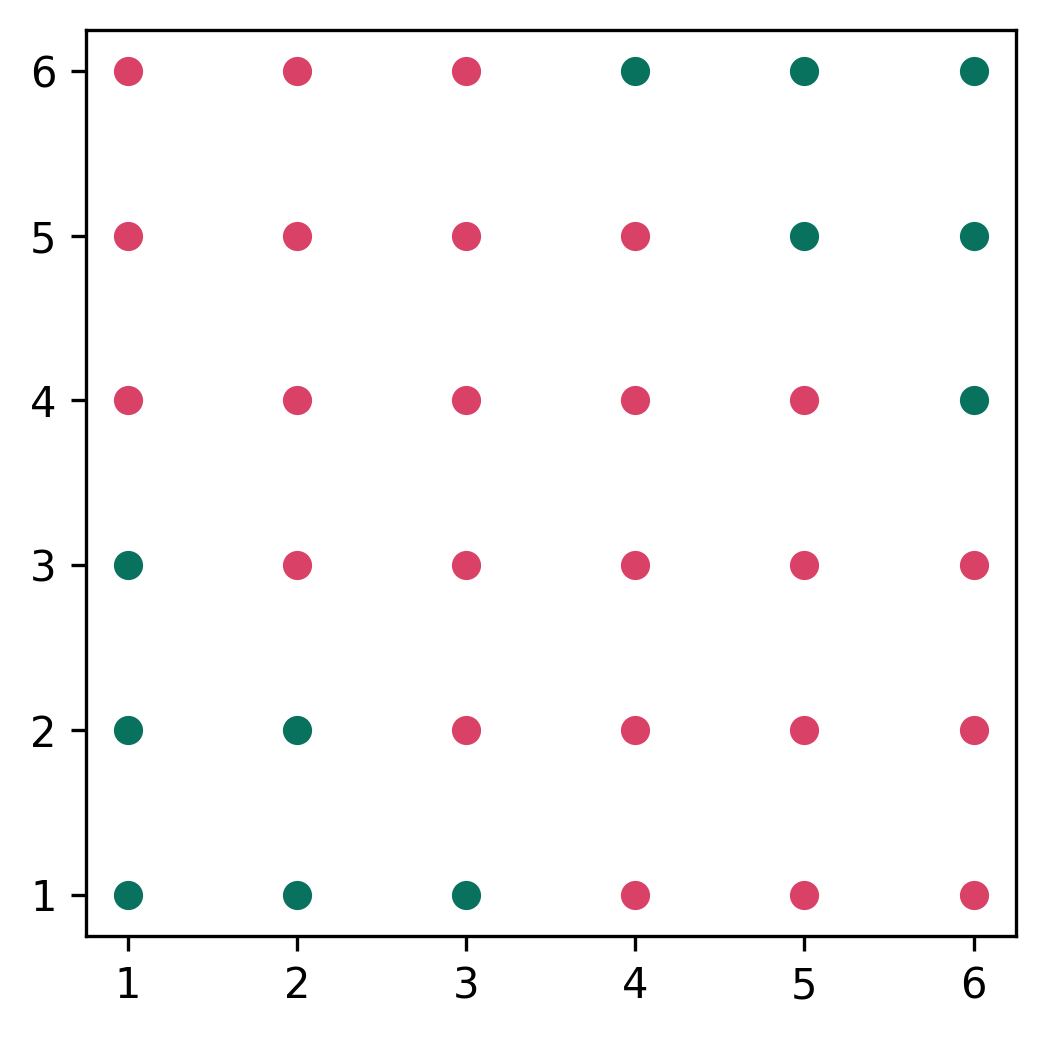} 
	 	\caption{A $2$-coloring of $[6]^2$ with no nondegenerate Schur triples.}
		 		
\end{figure}

\begin{figure} [H] 
		\includegraphics[scale=0.5]{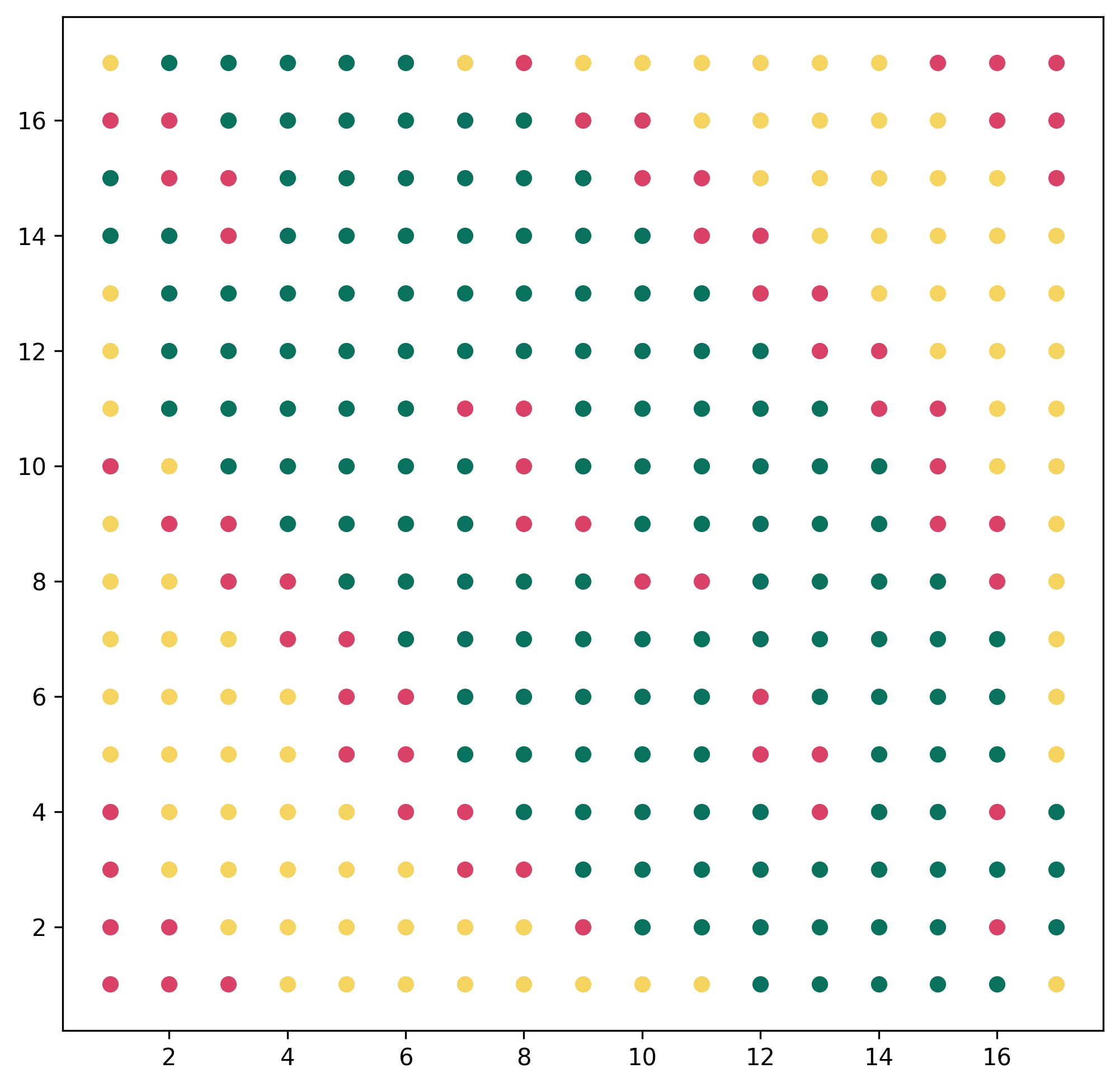} 
	 	\caption{A $3$-coloring of $[17]^2$ with no nondegenerate Schur triples.}
		 		
\end{figure}

\begin{figure} [H] 
		\includegraphics[scale=0.5]{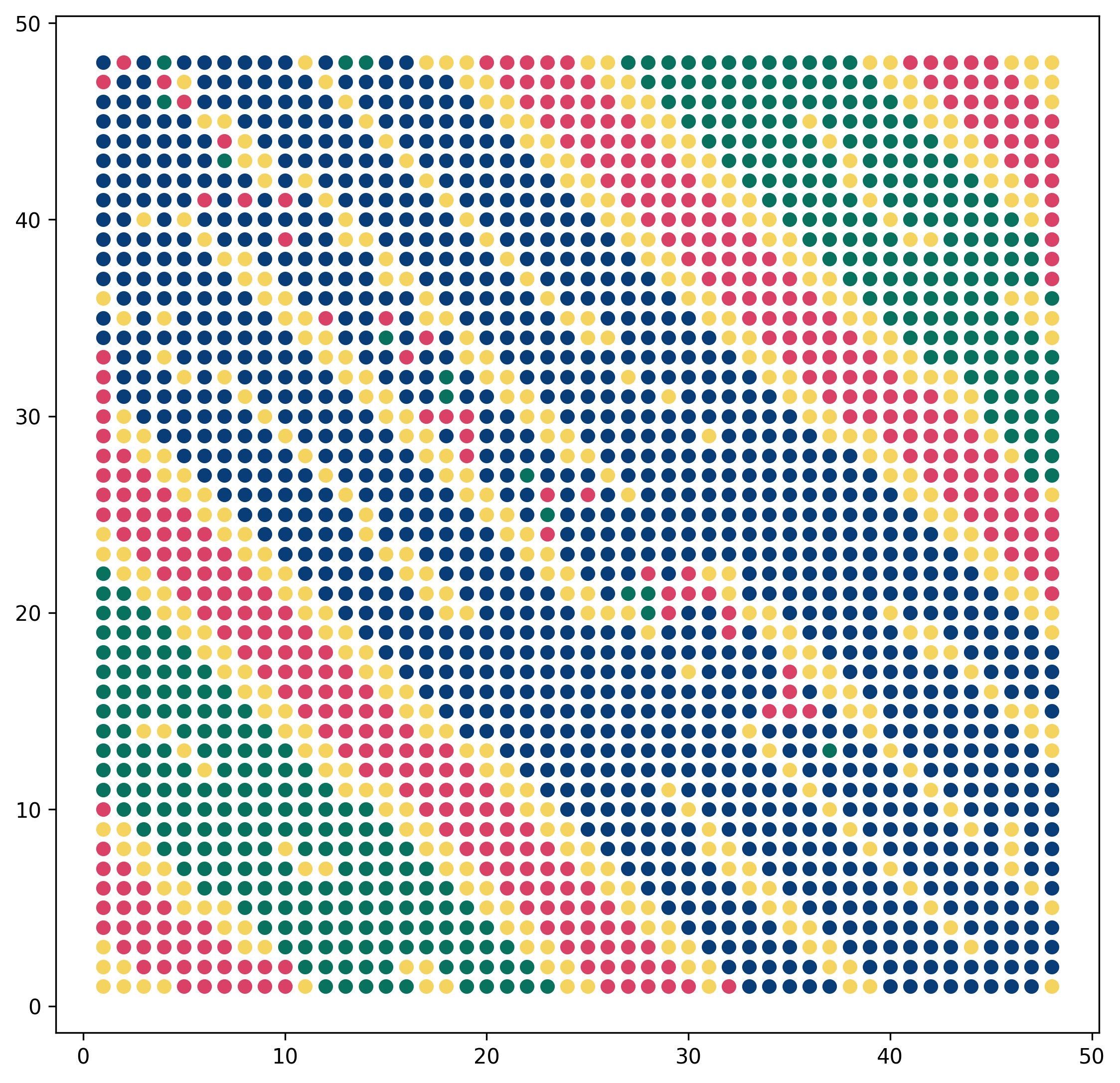} 
	 	\caption{A $4$-coloring of $[48]^2$ with no nondegenerate Schur triples.}
		 		
\end{figure} 
\newpage

\noindent \textbf{Acknowledgements:}  This research was initiated during the Summer 2021 Kinnaird Institute Research Experience at Millsaps College. All authors were supported during the summer by the Kinnaird Endowment, gifted to the Millsaps College Department of Mathematics. At the time of submission, Vishal Balaji and Andrew Lott were Millsaps College undergraduate students. The authors would like to thank the Mississippi Center for Supercomputing Research (MCSR), specifically MCSR director Benjamin Pharr, for allowing and assisting our computations on their Maple super cluster.


\begin{thebibliography}{10} 

\bibitem{boza} L. Boza, J.M. Mar\'{\i}n, M.P. Reveulta, M. I. Sanz,  $3$-color Schur numbers, {\it Discret. Appl. Math.} {\bf 263} (2019), 59-68. 

\bibitem{sn5} M.J.H. Heule,   Schur number five, {\em Proceedings
of the Thirty-Second AAAI Conference on Artificial Intelligence} (2018), 6598-6606.
 


\bibitem{Radz}  S. P. Radziszowski, Small Ramsey numbers, {\it Electron. J. Combin. DS1}, Revision \#16, Jan. 2021, \url{https://www.combinatorics.org/ojs/index.php/eljc/article/view/DS1}

\bibitem{ramsey} F. P. Ramsey,  On a problem of formal logic, {\it Proc. London Math. Soc.} {\bf 30} (1929), 264-286.

 

\bibitem{Schur} I. Schur, \"Uber die kongruenz $x^m+y^m=z^m \ (\text{mod } p)$, {\it Jahresber. Deutsche Math.-Verein.}, {\bf 25} (1916),  114-116.



\end{thebibliography}
\end{document}